\documentclass[11pt,letterpaper]{amsart}%
\usepackage{amsmath}
\usepackage{amsfonts}
\usepackage{amssymb}
\usepackage{graphicx}%
\setcounter{MaxMatrixCols}{30}
%TCIDATA{OutputFilter=latex2.dll}
%TCIDATA{Version=5.50.0.2960}
%TCIDATA{CSTFile=40 LaTeX article.cst}
%TCIDATA{Created=Wednesday, May 28, 2025 13:49:13}
%TCIDATA{LastRevised=Tuesday, July 08, 2025 11:14:04}
%TCIDATA{<META NAME="GraphicsSave" CONTENT="32">}
%TCIDATA{<META NAME="SaveForMode" CONTENT="1">}
%TCIDATA{BibliographyScheme=Manual}
%TCIDATA{<META NAME="DocumentShell" CONTENT="Standard LaTeX\Blank - Standard LaTeX Article">}
%BeginMSIPreambleData
\providecommand{\U}[1]{\protect\rule{.1in}{.1in}}
%EndMSIPreambleData
\newtheorem{theorem}{Theorem}

\newtheorem{proposition}[theorem]{Proposition}
\newtheorem{remark}[theorem]{Remark}

\numberwithin{equation}{section}
\begin{document}

\title[stability of optimal transport regularity ]{A quantitative stability result for regularity of optimal
transport on compact manifolds}
\author{Micah Warren}
\address{Department of Mathematics\\
University of Oregon, Eugene, OR 97403}
\email{micahw@uoregon.edu}

\begin{abstract}
We use a Korevaar-style maximum principle approach to show the following:
Fixing a $C^{2}$ bound on the log densities of a set of smooth measures, there is a
quantifiably-sized Wasserstein neighborhood over which all pairs of such measures will enjoy
smooth optimal transport. \ We do this in spite of unhelpful
MTW\ curvature, by showing that when the gradient of the Kantorovich potential is small enough, the Hessian ``bound"
places the Hessian in one of two disconnected regions, one bounded and the
other unbounded. \ Tracking  the estimate along a continuity path which starts
in the bounded region, we conclude the Hessian must stay bounded.

\end{abstract}

\maketitle
\section{Introduction}
Two decades ago, Ma, Trudinger and Wang \cite{MTW} proved the marquee result in the
regularity theory for optimal transport equations; namely that there is a 4th-order condition on the cost function which is essential to proving interior
smoothness of optimal transportation maps. \ Under some basic 
conditions, the positivity of the Ma-Trudinger-Wang curvature tensor is sufficient for regularity; that is, smooth, positive measures will be paired by regular
optimal transport maps. \ Shortly after, Loeper \cite{Loeper} showed that if
this quantity is negative anywhere, one can find smooth measures whose optimal
pairing is discontinuous. 

In practice, however, the MTW\ condition remains difficult to come by. \ If
the cost is distance-squared on a Riemannian manifold, the condition immediately fails on a manifold
with negative sectional curvature  \cite{Loeper}, and is difficult or impossible to prove when the curvature is
positive but non-homogeneous. While the condition holds on the round sphere \cite{DL2006}, it requires some effort to show even on high-regularity perturbations of spheres, (see \cite{DG}, \cite{LV1}, \cite{FR}, \cite{FRV}, \cite{GeYe}.)   

In the absense of a full regularity theory encompassing all smooth measures, one can study partial regularity (as done in \cite{MR3349831}) or one may ask if there
are reasonable sufficient conditions that may involve both the cost function
and the measures themselves, under which one can prove regularity. \ Here, we
offer an answer in the latter direction; if the smooth measures on a compact manifold
share a $C^{2}$ bound on the log densities, their pairing will be smooth
provided they are close (depending on the $C^2$ bound)\ to one another in
Wasserstein space.

In particular,

\begin{theorem}\label{main}
Suppose $\left(  M,g_{0}\right)  $ is a compact Riemannian manifold. \ \ Given
any $\bar{C}< \infty $, there is a $\delta(\bar{C},M,g_{0})>0 $ such that if $\mu$ and
$v$ are smooth measures satisfying
\[
\left\Vert \log\frac{d\mu}{dVol}\right\Vert _{C^{2}},\left\Vert \log\frac
{d\nu}{dVol}\right\Vert _{C^{2}}\leq\bar{C}%
\]
and
\[
W_2^{2}(\mu,\nu)<\delta
\]
then there is a $C_{0}(\bar{C})$ such that the optimal transport map $T$
between $\mu$ and $\nu$ satisfies%
\[
\left\Vert DT\right\Vert _{g_{0}}\leq C_{0}.
\]

\end{theorem}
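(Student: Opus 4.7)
My plan is to combine a continuity method with a Korevaar-style second-derivative maximum principle on the Kantorovich potential, exploiting the fact that smallness of $\nabla u$ degrades the unhelpful MTW contribution.

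\textbf{Step 1 (continuity path).} Fix the source $\mu$ and interpolate the target by a smooth family $\nu_t$ of probability measures whose log densities $\log(d\nu_t/dVol)$ run along a line from $\log(d\mu/dVol)$ to $\log(d\nu/dVol)$ (normalized); this keeps the $C^2$ bound $\bar C$ along the path, and by convexity of $W_2^2$ gives $W_2(\mu,\nu_t) \leq W_2(\mu,\nu) < \delta$. Let $u_t$ be the Kantorovich potential with $T_t(x)=\exp_x(\nabla u_t(x))$, so that at $t=0$ the map is the identity and $D^2 u_0 \equiv 0$. The goal is to propagate an a priori bound $\|D^2 u_t\|_{g_0}\leq C_0(\bar C)$ forward along the continuity path; once that estimate is in hand, openness and closedness of the set $\{t : u_t \text{ is smooth}\}$ follow from the standard inverse function theorem / Evans--Krylov package.

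\textbf{Step 2 (differentiated Monge--Amp\`ere at an interior max).} Choose a Korevaar-style test function of the form $w = \phi(u_t)\, (D^2 u_t)(\xi,\xi)$ evaluated in the direction $\xi$ realizing the largest eigenvalue of $D^2 u_t$, where $\phi$ is a small perturbation chosen to absorb first-order terms. At an interior spatial maximum of $w$, differentiating the Monge--Amp\`ere equation twice along $\xi$ and invoking the $C^2$ bound on the log densities yields a pointwise inequality of the schematic form
\begin{equation*}
w^2 \;\leq\; C_1(\bar C)\,w \;+\; C_2(\bar C) \;+\; \mathfrak{S}(x,\nabla u_t)\bigl(\xi,\xi\bigr)\, Q(w, \nabla u_t),
\end{equation*}
where $\mathfrak{S}$ is the MTW tensor of the squared-distance cost and $Q$ is a polynomial whose leading term in $w$ is of order $w^3$ but which carries a factor of $|\nabla u_t|^2$. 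The $w^2$ on the left is the usual favorable contribution from the concavity of $\log\det$, while the MTW term is the sole source of trouble, with no sign assumed.

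\textbf{Step 3 (two disconnected regions, and continuity).} For $|\nabla u_t|$ small enough (depending on $\bar C$), the MTW cubic is subdominant to the quadratic $w^2$ on an interval $[0,w_*(\bar C)]$, but it eventually dominates once $w \geq w^{**}(|\nabla u_t|^{-2})$, with $w^{**}\to\infty$ as $|\nabla u_t|\to 0$. Thus the allowed set $\{w : \text{inequality holds}\}$ is $[0,w_*] \cup [w^{**},\infty)$ with a genuine gap $(w_*,w^{**})$. Since $\max w(\cdot,0)=0$ lies in the bounded component and $t\mapsto \max w(\cdot,t)$ is continuous by the a priori smoothness along the continuity path, $\max w(\cdot,t)$ cannot cross the forbidden gap, and we conclude $\|D^2 u_t\|_{g_0}\leq w_*(\bar C)=:C_0$ for all $t\in[0,1]$.

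\textbf{Step 4 ($W_2 \Rightarrow$ smallness of $\nabla u_t$, and the main obstacle).} To justify Step 3, I need $\|\nabla u_t\|_{L^\infty}$ small enough; this is where $\delta$ enters. Any maintained Hessian bound makes $\nabla u_t$ uniformly Lipschitz, and since $\|\nabla u_t\|_{L^2(\mu_t)}=W_2(\mu_t,\nu_t)<\delta$ with densities bounded below in terms of $\bar C$, Gagliardo--Nirenberg interpolation on $(M,g_0)$ gives $\|\nabla u_t\|_{L^\infty}\leq C(\bar C)\,\delta^{\alpha}$ for some $\alpha>0$. This closes the loop: choose $\delta$ so that $C(\bar C)\delta^\alpha$ forces the polynomial inequality of Step 2 to have a genuine gap of size comparable to $w_*(\bar C)$. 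The principal obstacle is quantitative: one must show that the bounded threshold $w_*$ can be chosen depending only on $\bar C$ while the forbidden gap remains open uniformly along the continuity path, which requires careful bookkeeping of every $|\nabla u_t|$-weighted term in the differentiated Monge--Amp\`ere equation on a general Riemannian background, and requires an interpolation bound for $\nabla u_t$ that uses only the Hessian bound currently being propagated, not the full regularity that is the conclusion.
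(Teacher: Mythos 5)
Your high-level architecture (continuity path from the identity, Korevaar-type maximum principle, a polynomial inequality whose admissible set splits into two components, and a no-crossing argument) is the same as the paper's. But the quantitative mechanism you propose for why a small gradient saves the estimate is not the one that actually works, and as written it has a genuine gap. You claim the unfavorable MTW contribution enters as $\mathfrak{S}(x,\nabla u_t)(\xi,\xi)\,Q$ with $Q$ ``carrying a factor of $|\nabla u_t|^2$,'' so that it is subdominant to a favorable $w^2$ coming from concavity of $\log\det$. Neither half of this is justified. Near the diagonal the MTW tensor of the squared-distance cost does not vanish (it reduces to sectional curvature), so the bad fourth-order-in-$c$ terms in the twice-differentiated equation are of size $O(|w|^{3n-1})$ with coefficients depending only on the geometry -- they do \emph{not} come weighted by $|\nabla u|^2$. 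And the concavity of $\log\det$ supplies a good sign on the squared third-derivative terms, not a free $w^2$ on the favorable side. In the paper the smallness of $\nabla u$ is injected through the \emph{test function}: $\eta=\bigl(1+\delta_0^{-2}|Du|^2-2|x|^2\bigr)^{+}$ remains between $1$ and $1+(1.1)^2$ precisely because $|Du|\le\delta_0$, while $w^{ij}\eta_{ij}$ produces the term $\frac{2}{\delta_0^{2}}w^{ij}u_{ki}u_{kj}\ge\frac{1}{\delta_0^{2}}w_{11}$, i.e.\ a term \emph{linear} in $w$ with an arbitrarily large coefficient. The dichotomy is then $\delta_0^{-2}\Lambda\le C_1+C_2\Lambda^{3n-1}$ with $C_1,C_2$ independent of $\delta_0$. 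Your test function $\phi(u_t)\,(D^2u_t)(\xi,\xi)$ built on the height function $u$ cannot generate this dominant term: $w^{ij}(\phi(u))_{ij}$ yields only $\phi'(u)\,w^{ij}u_{ij}+\phi''(u)\,w^{ij}u_iu_j$, which is bounded independently of $w_{11}$. So the gap you need between the two components of the admissible set does not open up from your computation.

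Two further points. First, the obstacle you flag in your Step 4 is resolved in the paper without any circularity: the gradient bound is obtained from the $L^2$ stability estimate of Kitagawa--L\'eonard--et al.\ ($\int|u|^2d\mu\le C\,W_1(\mu,\nu)^{1/2}$) combined with a lemma showing that a \emph{semiconvex} function with $\max|\nabla u|=b$ must satisfy $\|u\|_{L^2}^2\ge c\,b^{n+4}$; semiconvexity is free from $c$-convexity, so no upper Hessian bound is consumed. Your interpolation route (Lipschitz bound on $\nabla u_t$ from the propagated Hessian bound) can probably be made non-circular at the contradiction time, where the Hessian is still at the fixed threshold, but you leave it unresolved. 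Second, your continuity path interpolates log-densities linearly and asserts $W_2(\mu,\nu_t)\le W_2(\mu,\nu)$ ``by convexity of $W_2^2$''; that monotonicity is not clear for that path. The paper interpolates the measures themselves, $\rho(t)=(1-t)\mu+t\nu$, for which a partial-transport plan gives the Wasserstein monotonicity directly.
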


Recall the Wasserstein distance between $\mu$ and $\nu$ is defined as:

\[
W_2^2(\mu, \nu) = \inf_{\gamma \in \Gamma(\mu, \nu)} \int_{M \times M} d^2(x, y) \, d\gamma(x, y)
\]

where:
\begin{itemize}
  \item $d(x, y)$ is the geodesic distance between points $x$ and $y$ on $M$,
  \item $\Gamma(\mu, \nu)$ is the set of all transport plans  $\gamma$ on $M \times M$ with marginals $\mu$ and $\nu$.
\end{itemize}

Regularity for optimal transport is generally an open condition; thus the property is stable in  a broad sense \cite{CF}.  The stability results in \cite{CF} require only smallness on $C^0$ perturbations of the densities, and allow $C^2$ perturbations of the cost.   Quantitative statements on regularity for cost $|x-y|^p$ for $p>1$ in Euclidean domains were obtained in \cite{CGN} and extended in \cite{CF2}. The results in  \cite{CGN} and  \cite{CF2} involve several parameters including the exponent $p$ and the distance between the supports of the measures, and but only require H\"older regularity on the densities.    For parabolic optimal transport equations, results were obtained in \cite{AK} when $c$ is a perturbation of a cost satisfying a weak MTW condition.

Our strategy combines a maximum principle argument with a dichotomy-type bound along the continuity path. Following recent work in \cite{KLM}, we first observe that closeness in Wasserstein space implies small gradients for solutions to the Monge-Amp\`ere equation, provided the measure densities are bounded below by a positive constant. This observation enables the construction of a test function involving the gradient, with constants strong enough to control the evolution—even in the presence of potentially unfavorable MTW terms—so long as we remain along the continuity path.

The dichotomy-type bound refers to inequalities of the form
$$ 10^{541}x \leq 1 + x^{7}$$which ensure that for $x>0$ either $x$  is very small or significantly large (e.g., $x>100$). This kind of bound was previously used in \cite{W_log} to show that highly concentrated Gaussians supported in small regions map smoothly to nearby Gaussians, even in the absence of the MTW condition.

In contrast to the results in  \cite{CGN}, \cite{CF2} and \cite{AK}  our setting of compact manifolds without boundary is much simplified by the fact that we don't need to deal with boundary conditions and can operate directly at the interior point where a maximum is obtained.  On the other hand, our cost function may be quite far from a cost satisfying an MTW condition.    The results in  \cite{CGN}, \cite{CF2},   and  \cite{CF} are obtained using delicate Caffarelli-style level set approximation arguments, and thus the proofs are by nature somewhat different than ours.  The results offered below use a maximum principle, and thus necessarily will require a $C^2$ bound on the log densities; at the same time this also allows us to somewhat painlessly provide clean universal bounds in the statements.  

In \cite{KW} and \cite{JK2} (dealing with optimal transport on boundaries of regions in Euclidean space), a ``stay close-by'' estimate is proved in order keep the optimal transport plan within a region where regularity properties hold.   We argue here, using the recent stability results in  \cite{KLM}, that by choosing Wasserstein distance small enough, one can squeeze the graph of the optimal map into an arbitrarily small neighborhood of the diagonal.  This suggests that one could argue a general stability result (requiring a uniform bound only on the log densities, not their $C^2$ norms)   for manifolds with strictly positive sectional curvature directly using the MTW maximum principle argument.  Note that stability near the diagonal on any Riemannian manifold for $C^0$ perturbations of densities has been shown in \cite[Corollary 2.2]{CF}.

As the $C^2$ bound on the log densities controls the densities themselves, it follows that a small enough $C^2$ bound (i.e $\bar{C}<<1$) can force both conditions in Theorem \ref{main} to hold: thus there is a quantifiable $C^2$ ball in the $C^2$ density norm wherein optimal transport will be smooth (again, cf.  \cite[Corollary 2.2]{CF}).

As we will be absorbing the MTW terms into a general polynomial structure, we do not address the MTW condition in detail here. For a more involved discussion, see the original paper \cite{MTW} or \cite[Chapter 12]{Villani}.

The test function central to our maximum principle argument is inspired by the original work of Korevaar \cite{K}, where the height function of the minimal graph is incorporated to derive estimates for the quasi-linear minimal surface equation. For fully nonlinear equations, this strategy remains useful but typically requires more careful implementation and stronger structural conditions; see, for example, \cite{WY_L} and \cite{BWW}.

\textbf{Acknowledgement} The author is grateful for many conversations with  Farhan Abedin and Jun Kitagawa about this topic.

\section{Preliminaries}

This section is included to set up consistent notation; for more details and
background on the Monge-Amp\`ere equation governing the optimal transport problem, see \cite{Brenier}, \cite{McCann}, \cite{MTW}.  

We will work on a fixed manifold $\left(  M,g_{0}\right)  $ with $c$ the
Riemannian distance-squared function determined by $g_{0}.$

Generally, given a cost function, $c$, on $M\times\bar{M}$ a\ function $u$ is $c$-convex, if
\[
u(x)=\sup_{y\in\bar{M}}u^{\ast}(y)-c(x,y)
\]
for some function $u^{\ast}$ on $\bar{M}.$ \ On a compact manifold with a cost
function that is semi-concave, it follows that a c-convex function will be
semi-convex. For our purposes, this means that in a given set of coordinates, there will be a (negative) lower bound on the Hessian of $u$.  \ \ The distance-squared function on a manifold, being the
infimum of smooth functions, is semi-concave. \ 

We also require the cost to satisfy the (A1) and (A2) conditions as  in \cite{MTW}: The (A1) condition is that given any value $V\in T_{x}^{\ast}M$ there will be
at most one $y\in\bar{M}$ such that
\[
Dc(x,y)=V.
\]
This allows us to implicitly define a transport map $T(x)$ from a $c$-convex
function $u$ via
\begin{equation}
Du(x)+Dc\left(  x,T(x)\right)  =0 \label{a1}%
\end{equation}
whenever $Du(x)$ lies in the range of $y\rightarrow$ $-Dc(x,y).$ \ The (A2)
condition is that
\[
\det(D\bar{D}c)\neq0
\]
(away from a possible cut-locus.) \ Differentiating (\ref{a1}) we get
\[
D^{2}u+D_{x}^{2}c(x,T(x))+D_{x\bar{x}}^{2}c(x,T(x))\cdot DT=0
\]
which is written
\begin{equation}
w_{ij}:=u_{ij}+c_{ij}=-c_{i\bar{s}}T_{j}^{\bar{s}}.\label{asab}%
\end{equation}

Taking determinants, we get
\[
\det\left(  w_{ij}\right)  =\det(-c_{i \bar{s}})\det DT.
\]
From the  Brenier-McCann theory, whenever a $c$-convex function
generates a map that satisfies the Jacobian equation for mass transfer, this
map will be a solution of the optimal transport problem. \ Thus the optimal
transport equation is given by (after taking logs)\
\begin{equation}
\log\det\left(  w_{ij}\right)  =\log\det(-c_{i\bar{s}})+f(x)-g(T(x))
\label{MAE}%
\end{equation}
where
\begin{align*}
\mu &  =e^{f(x)}dx\\
\nu &  =e^{g(T(x))}d\bar{x}.
\end{align*}
\begin{remark}
It should be noted that while working on manifolds, the equation (\ref{MAE}) holds good
given any choice of coordinate systems near $x$ and $T(x)$, even though the
values of the functions $f$ and $g$ depend on the coordinate system itself, as
do $\det(-c_{i\bar{s}})$ and $\det(w_{ij})$.  The equation itself is invariant under change of
coordinate system. The solution $u$ however, is itself well-defined independent of any coordinate system, at least up to an additive constant. 
\end{remark}
Note that when%
\[
c(x, \bar{x}) =\frac{d_{g_{0}}^{2}(x,\bar{x})}{2}%
\]
we have, taking normal coordinates at $\left(  x,x\right)  $
\[
c_{i\bar{s}}=-\delta_{is}.%
\]
It follows that in a neighborhood of the diagonal on a compact manifold, the matrix $-c_{i\bar{s}}$ will be uniformly
positive and the matrix $c_{ij}$ will be uniformly bounded, as will all sets of
mixed or unmixed derivatives $c_{ij\bar{s}k}$ etc. \ All of our computations
will lie in these neighborhoods, in particular, the standard conditions (A1)
and (A2) are immediate in a small enough neighborhood of the diagonal wherein
we will remain. \

Note that although $M$ and $\bar{M}$ are the same manifold, we use different notations to distinguish the source from the target.

While (\ref{MAE}) is geometric,  in our setup for the continuity method we are going to also make use of a
different Riemannian metric $w,$ which depends on the solution $u$.  Given a coordinate system, define as in  (\ref{asab}) above
\[
w(\partial_{i},\partial_{j})(x)=w_{ij}(x)=u_{ij}(x)+c_{ij}(x,T(x)).  
\]

While at first this appears like a second derivative computation in coordinate system,  it is indeed a metric.  This can be seen by observing that it is the induced metric on the graphical submanifold with respect to the Kim-McCann metric \cite{KM},  or by noting that it is the Hessian of the function $u\left(  \cdot\right) +c(\cdot,T(x))$  - whose first derivative vanishes by assumption (\ref{a1}), so the Hessian indeed defines a genuine metric.

\section{Overview of Proof}

The geometric quantity we would like to control is
\begin{equation}
\Lambda(x)=\max_{\left\vert e\right\vert _{g_{0}}=1,e\in T_{x}M}w(e,e).
\label{lambdadef}%
\end{equation}
Without loss of generality, we may perform a one-time scaling on $\left(
M,g_{0}\right)  $ so that 1) The injectivity radius satisfies
\begin{equation}
\iota>2 \label{mep}%
\end{equation} and 2) The metric on normal charts of radius $2$ are equivalent to the Euclidean
metric, that is, for%
\[
\xi=\xi^{i}\partial_{i}%
\]
written in normal coordinates, we have
\begin{equation}
0.9\left\Vert \xi\right\Vert _{g_{0}}^{2}\leq\sum_{i=1}^{n}\left(  \xi
^{i}\right)  ^{2}\leq1.1\left\Vert \xi\right\Vert _{g_{0}}^{2} \label{meq}.%
\end{equation}
\ 

\ It follows that if we can control
\begin{equation}
|w|(x)=\max_{i}w(\partial_{i},\partial_{i}) \label{wabdef}%
\end{equation}
on each ball, we can control the global maximum of $\Lambda.$ \ This in turn
will give local bounds in each chart on $D^{2}u.$ \ 

Continuity is an essential component of the estimate itself. \ The proof plays out in
the following steps. \ \ 

\textbf{Step 1.} Set up a continuity path of measures for $ t\in [0,1]$:%
\[
\rho(t)=(1-t)\mu+t\nu
\]
and observe that the problem of mapping $\mu$ to $\rho(0)=\mu$ is already
solved by $u=0.$ \ By standard elliptic PDE and funtional analysis, there will
be an interval $[0,T)$ upon which the Monge-Ampere equation has a solution.

\textbf{Step 2.} Show that if $W^{2}(\mu,\nu)<\delta_{1}$ $\left(
C_{0},M\right)  $ then for some $\delta_{0}(\delta_{1})$, the solution
$u^{(t)}$ will satisfy
\[
\left\vert \nabla u^{(t)}\right\vert \leq\delta_{0}%
\]
for each $t$ along the path, and this bound can be made arbitrarily small. \ 

\textbf{Step 3.} Given $\delta_{0}$ construct a local test function $\eta$
(involving $\delta_{0}$), a function $\nu=\left\vert w\right\vert ^{c_{n}}$
(the constant $c_{n}$ depends only on $n)$ and show there is a value $C_{3}$
(depending on the geometry of the distance-squared cost for $g_{0}$) such
that if
\begin{equation}
\max_{B_{1}}\eta v\geq C_{3} \label{mc3}%
\end{equation}
then at this point we also have
\begin{equation}
\frac{1}{\delta_{0}^{2}}\left\vert w\right\vert \leq C_{1}+C_{2}\left\vert
w\right\vert ^{3n-1} \label{kd}%
\end{equation}
for $C_{1}$ and $C_{2}$ depending on geometry of the distance-squared cost and
also the log densities of $\mu$ and $\nu,$ but not $\delta_0$. 

\textbf{Step 4.} Given the setup, we now choose $\delta_{0}$ small enough so
that the bound (\ref{kd}) on $\left\vert w\right\vert $ splits possible values
of $\left\vert w\right\vert $ two regions; $[0,a]$ and $[b,\infty),$ with
$a<C_{3}<b.$ \ Then we run a contradiction argument: $\ $The value of
$\left\vert w\right\vert $ can never reach $b.$ \ If it does, it must have
exceeded $C_{3}$ before reaching $b,$ violating (\ref{kd}).
Now by Krylov-Evans and standard elliptic theory, the solution and its bounds continue all the way to $t=1$.  

\section{Step 1:\ Setting up the continuity path.}

We assume that
\[
W^{2}(\mu,\nu)<\delta.
\]
Consider the path of measures
\begin{equation}
\rho(t)=(1-t)\mu+t\nu. \label{rho of t}%
\end{equation}

Let
\[
\mathcal{B}_{1}:=\left\{  u\in C^{2,\alpha}(M)\mid\int ud\mu=0\right\}
\]
and $\mathcal{U\subset B}_{1}$ be the open set
\[
\mathcal{U}:=\left\{  u\in\mathcal{B}_{1}\mid\left\{
\begin{array}
[c]{c}%
u\text{ is strictly c-convex and }\\
T(u)\text{ is a diffeomorphism}%
\end{array}
\right\}  \right\}
\]
and
\[
\mathcal{B}_{2}:=\left\{  h\in C^{\alpha}(M)\mid\log\int e^{h}d\mu=0\right\}
\]
and define%
\[
F:\mathcal{B}_{1}\times\lbrack0,1]\rightarrow\mathcal{B}_{2}%
\]
via the local expression \
\[
F(u,t):=\log\det\left(  u_{ij}+c_{ij}(x,T(x))\right)  -\log\det(-c_{is}%
(x,T(x)))-f(x)+g(T(x),t)
\]
for $T(x)$ determined from $u$ via (\ref{a1}) and where $g(T(x),t)$ is the appropriate target
density corresponding to the measure (\ref{rho of t}) and $f(x)$ is the
density corresponding to $\mu$ in a given coordinate system.

First check that $F$ lands in $\mathcal{B}_{2}:$ \ Given any c-convex $u$, let
$h=F(u,t),$ then
\begin{align*}
e^{h}d\mu &  =\det DTe^{-f}e^{g(T\left(  x,t\right)  }e^{f}dx\\
&  =\det DTe^{g(T\left(  x,t\right) ) }dx
\end{align*}
which is the pullback of a probability measure on the image via a
diffeomorphism. Thus $F$ lands in $\mathcal{B}_{2}.$

We would like to show that for every $t\in\lbrack0,1]$ there is a
$u^{(t)} \in\mathcal{B}_{1}$ such that
\[
F(u^{(t)},t)=0.
\]

First observe that the problem of mapping $\mu$ to $\rho(0)=\mu$ is already
solved by $u=0:$ \ Taking any product of normal systems at a point, it is easy
to check that along the diagonal $(x,x)$
\begin{align*}
c_{ij}  &  =\delta_{ij}\\
-c_{i\bar{s}}  &  =\delta_{i\bar{s}}%
\end{align*}
and the map $T(x)$ constructed from $u=0$ $\ $will be $T(x)=x.$ \ Thus
$F(0,0)=0.$

Now we would like to use continuity as in \cite[Theorem 17.6]{GT}:\ We
claim that $dF$ is invertible on the first factor. \ \ \ Note that the tangent
space to $\mathcal{B}_{1}$ at $u$ is given by
\[
T_{u}\mathcal{B}_{1}=\left\{  z\in C^{2,\alpha}\mid\int zd\mu=0\right\}
\]
and%
\[
T_{F(u,t)}\mathcal{B}_{2}=\left\{  h\in C^{\alpha}\mid\int he^{F(u,t)}%
d\mu=0\right\}
\]
in particular, if at any $t$ we have $F(u,t)=0$, then%
\[
T_{0}\mathcal{B}_{2}=\left\{  h\in C^{\alpha}\mid\int hd\mu=0\right\}.
\]
To use Lax-Milgram, we will set up an operator which behaves well with respect
to the metric $w.$ Define%
\[
\dot{H}^{1}(d\mu)=\left\{  z\mid\int zd\mu=0\text{ and }\int\left\vert
\nabla_{w}z\right\vert _{w}^{2}d\mu<\infty\right\}
\]
which has a nice norm making use of $w$ as a Riemannian metric
\[
\left\Vert z\right\Vert ^{2}:=\int\left\vert \nabla_{w}z\right\vert _{w}%
^{2}d\mu.
\]
Then define
\begin{align*}
a  &  :\dot{H}^{1}(d\mu)\times\dot{H}^{1}(d\mu)\\
a\left(  v,u\right)   &  =-\int w\left(  \nabla_{w}v,\nabla_{w}u\right)
d\mu\\
&  =-\int w\left(  \nabla_{w}v,\nabla_{w}u\right)  e^{\beta}\sqrt{\det w_{ij}%
}dx
\end{align*}
where
\[
\beta=f-\frac{1}{2}\log\det w_{ij}.
\]
Note that integrating by parts gives (divergence with respect to $w)$
\begin{align*}
a\left(  v,u\right)   &  =\int ve^{-\beta}\left(  \operatorname{div}e^{\beta
}\nabla_{w}u\right)  e^{\beta}dVol_{w}\\
&  =\int vLue^{\beta}dVol_{w}%
\end{align*}
for
\begin{align*}
Lu  &  =e^{-\beta}\left(  \operatorname{div}e^{\beta}\nabla_{w}u\right) \\
&  =\Delta_{w}u+\nabla\beta\cdot\nabla u
\end{align*}
Note that at a solution we have%
\begin{align*}
0  &  =\log\det\left(  u_{ij}+c_{ij}(x,T(x))\right)  -\log\det(-c_{is}%
(x,T(x)))-f(x)+g(T(x),t)\\
&  =-\beta+\frac{1}{2}\log\det\left(  u_{ij}+c_{ij}(x,T(x))\right)  -\log
\det(-c_{is}(x,T(x)))+g(T(x),t)
\end{align*}
so we have
\[
\beta=\frac{1}{2}\log\det\left(  u_{ij}+c_{ij}(x,T(x))\right)  -\log
\det(-c_{is}(x,T(x)))+g(T(x),t).
\]
Compute the linearization of $F$
\[
dF(u,t)(z):=w^{ij}\left(  z_{ij}+c_{ij\bar{s}}\left(  -c^{sk}\right)
z_{k}\right)  -c^{\bar{s}i}c_{i\bar{s}\bar{p}}c^{pk}z_{k}-g_{s}c^{sk}z_{k}.
\]
A\ standard computation gives
\[
dF(u,t)(z)=Lz, 
\]
so our goal is to argue that $L$ is invertible, that is for any $h\in$%
\ $T_{0}\mathcal{B}_{2}$ there is $z\in T_{u}\mathcal{B}_{1}$ such that
$Lz=h.$ \ \ 

It is immediate from the definition of $a$ and $\left\Vert \cdot\right\Vert $
that $a$ is coercive on $\dot{H}^{1}(d\mu).$ \ Given $h\in C^{\alpha}$ (which
is $L^{2}$ on a compact manifold) we define
\[
l_{h}\in\left(  \dot{H}^{1}(d\mu)\right)  ^{\ast}%
\]
via
\[
l_{h}(z)=\int zhd\mu.
\]
We check that $l_{h}$ is bounded:
\[
l_{h}(z)\leq\left(  \int h^{2}d\mu\right)  ^{1/2}\left(  \int z^{2}%
d\mu\right)  ^{1/2}\leq C_{h}\left\Vert z\right\Vert
\]
where we have implicity used that a uniformly positive smooth measure on a
compact manifold will enjoy a Poincar\'{e} inequality. \ \ Now by Lax-Milgram,
there exists a unique weak solution $z$ satisfying
\[
-\int w\left(  \nabla_{w}z,\nabla_{w}\eta\right)  d\mu=\int\eta hd\mu.
\]
for all $\eta\in\dot{H}^{1}.$ \ Now at a fixed solution the operator $L$ is a
uniformly elliptic divergence type operator, we may apply standard elliptic
regularity theory to conclude that $v\in C^{2,\alpha}$ satisfies
\[
Lz=h.
\]
The conditions in \cite[Theorem 17.6]{GT} are satisfied and we proceed along
the continuity path.

\section{Step 2: Gradient can be squeezed by Wasserstein}

Recall the result from \cite[Theorem 1.2 ]{KLM}, adapted here for our purposes:

\begin{theorem} (\cite{KLM})
Let $M$ be a compact smooth connected Riemannian manifold without boundary,
endowed with the quadratic cost $c=\frac{d^{2}(x,y)}{2}$ and let $\rho$ be a
probability measure absolutely continuous with respect to the Riemamnnian
volume on $M$ with density bounded from above and below by positive constants.
Then there exists $C>0$ such that for any $\mu,\nu\in P(M)$%
\[
\int\left\vert \phi_{\mu}-\phi_{\nu}\right\vert ^{2}d\rho\leq CW_{1}(\mu
,\nu)^{1/2}%
\]
where $\phi_{\mu},\phi_{\nu}$ are Kantorovich potentials pairing $\rho$ with
$\mu,\nu$ respectively and satisfying $\int\phi d\rho=0.$
\end{theorem}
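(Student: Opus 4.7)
The plan is to combine Kantorovich duality with a pointwise sandwich estimate and then interpolate between $L^\infty$ and $W_1$-type bounds. Since $c = d^2/2$ is semi-concave on the compact manifold $M$, both the $c$-concave potentials $\phi_\mu, \phi_\nu$ and their $c$-conjugates $\phi_\mu^c, \phi_\nu^c$ are Lipschitz with a common constant $L = L(M)$. Write $h := \phi_\mu - \phi_\nu$ and $H := \phi_\mu^c - \phi_\nu^c$; the normalization gives $\int h\, d\rho = 0$, and the Lipschitz bounds together with compactness yield uniform $L^\infty$ control on both $h$ and $H$ in terms of $M$ alone.

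First, I would extract a weak Wasserstein bound on $H$. Testing the admissible pair $(\phi_\nu, \phi_\nu^c)$ in the dual problem for $W_2^2(\rho, \mu)/2$, and symmetrically swapping $\mu \leftrightarrow \nu$, one obtains after using $\int h\, d\rho = 0$ the one-sided bounds $\int H\, d\mu \geq 0$ and $\int H\, d\nu \leq 0$. Hence $\int H\, d(\mu - \nu) \geq 0$, and Kantorovich-Rubinstein applied to the Lipschitz function $H$ produces
\[
0 \;\leq\; \int H\, d(\mu - \nu) \;\leq\; 2L\,W_1(\mu, \nu).
\]

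Next, the transport identities give a pointwise sandwich. The equality $\phi_\mu(x) + \phi_\mu^c(T_\mu(x)) = c(x, T_\mu(x))$ paired with the admissibility inequality $\phi_\nu(x) + \phi_\nu^c(T_\mu(x)) \leq c(x, T_\mu(x))$ yields $h(x) \geq -H(T_\mu(x))$ for $\rho$-a.e.\ $x$, and the symmetric pair gives $h(x) \leq -H(T_\nu(x))$. Squaring and integrating against $\rho$,
\[
\int h^2\, d\rho \;\leq\; \int H(T_\mu(x))^2\, d\rho(x) + \int H(T_\nu(x))^2\, d\rho(x) \;=\; \int H^2\, d\mu + \int H^2\, d\nu.
\]

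The main obstacle, and the source of the fractional exponent, is to promote the $L^1$-type bound of Step~1 into the $L^2$-type bound appearing on the right-hand side of Step~2. My plan would be to interpolate between the uniform bound $\|H\|_\infty \leq C(M)$ and the signed-integral control, using the density upper and lower bounds on $\rho$ to trade integration against $\mu$ or $\nu$ for integration against $\rho$ at a Wasserstein-scaled cost. A Cauchy-Schwarz balance between the $L^\infty$ Lipschitz bound and the $W_1$-controlled first moment should produce the factor $W_1^{1/2}$. Executing this interpolation cleanly while relying only on the density bounds of $\rho$, and not on any regularity of $\mu$ or $\nu$, is the delicate point, and is where I would expect essentially all of the technical work to be concentrated.
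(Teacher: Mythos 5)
This statement is quoted in the paper as an external result from \cite{KLM}; the paper gives no proof of it, so your attempt can only be measured against the argument in that reference. Your Steps 1 and 2 are correct and standard: the duality-gap identity $0\leq \int H\,d(\mu-\nu)\leq 2L\,W_1(\mu,\nu)$ and the pointwise sandwich $-H(T_\mu(x))\leq h(x)\leq -H(T_\nu(x))$ are both genuine ingredients of stability proofs of this type. The problem is that your Step 3, which you yourself flag as the delicate point, is not a technical detail to be filled in later --- it is the entire theorem, and the route you sketch cannot be completed. The sandwich reduces the claim to bounding $\int H^2\,d\mu+\int H^2\,d\nu$, but $\mu$ and $\nu$ are \emph{arbitrary} probability measures: take $\mu$ a Dirac mass at a point where $|H|$ is of order one and the left side is of order one regardless of $W_1(\mu,\nu)$ being small, so no interpolation between $\|H\|_\infty$ and the signed integral $\int H\,d(\mu-\nu)$ can control it. Two specific obstructions: (i) $H$ changes sign and carries no normalization, so smallness of $\int H\,d(\mu-\nu)$ gives no control on $\int |H|\,d\mu$, let alone $\int H^2\,d\mu$; (ii) ``trading integration against $\mu$ for integration against $\rho$ at a Wasserstein-scaled cost'' would require $W(\mu,\rho)$ to be small, which is not assumed --- only $\rho$ has density bounds, and $\mu,\nu$ may be singular and far from $\rho$.

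The actual proof in \cite{KLM} (following the Delalande--M\'erigot strategy) keeps everything on the $\rho$ side. One works with the dual functional $\mathcal{K}(\psi)=\int \psi^{c}\,d\rho$ and proves a strong concavity (variance) inequality along the segment between $\psi_\nu$ and $\psi_\mu$, of the schematic form
\[
\mathcal{K}\Bigl(\tfrac{\psi_0+\psi_1}{2}\Bigr)\;\geq\;\tfrac12\mathcal{K}(\psi_0)+\tfrac12\mathcal{K}(\psi_1)+\tfrac{1}{C}\,\mathrm{Var}_\rho\bigl(\psi_0^{c}-\psi_1^{c}\bigr)^{2},
\]
which is where the upper and lower density bounds on $\rho$ and the geometry of $M$ enter (via localization and gluing of Poincar\'e-type estimates). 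Combining this with suboptimality of $\psi_\nu$ in the $\mu$-problem yields $\mathrm{Var}_\rho(h)^2\lesssim \int H\,d(\mu-\nu)\lesssim W_1(\mu,\nu)$, which is exactly the source of the exponent $1/2$ in the statement. Your proposal contains no analogue of this convexity input, and without it the estimate does not close.
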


In our case, we may take $\mu$ to take the place of $\rho$ in which case
$\phi_{\mu}=0$ and we get
\begin{equation}
\int\left\vert u\right\vert ^{2}d\mu  \leq CW_{1}(\mu,\nu)^{1/2}  \leq CW_{2}(\mu,\nu)^{1/4}  \leq C\delta^{1/8}. \label{preb}%
\end{equation}
Now $u$ is a semiconvex function: that is, on a given chart, we will
have
\[
D^{2}u\geq-A
\]
for some uniform $A.$ We next obtain  a lower bound on the $L^{2}$ norm of $u$ from its  interior
gradient maximum:

\begin{proposition}
\label{bbbb}Suppose that $u$ is a semi-convex function on $M$. \ Let
\[
b=\max\left\vert \nabla u\right\vert .
\]
Then there is a value $c(A,g,\mu)>0$ and a value $C(A,g,\mu)$ such that if
\[
\int\left\vert u\right\vert ^{2}d\mu\leq c(A,g,\mu)
\]
then
\[
b^{n+4}\leq C(A,g,\mu)\left\Vert u\right\Vert _{L^{2}(d\mu)}^{2}.
\]

\end{proposition}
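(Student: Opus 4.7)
The plan is to exploit semi-convexity in a small region near a max-gradient point to find a set where $u$ varies significantly. Let $x_0 \in M$ realize $|\nabla u|(x_0) = b$, and work in a normal chart at $x_0$. The hypothesis $D^2 u \geq -A$ gives the Taylor lower bound
\begin{equation*}
u(x) \geq u(x_0) + \nabla u(x_0) \cdot x - \frac{A}{2}|x|^2.
\end{equation*}
Setting $e = \nabla u(x_0)/b$, on the cylindrical region
\begin{equation*}
E = \left\{ s e + v : \tfrac{b}{4A} \leq s \leq \tfrac{b}{2A},\ v \perp e,\ |v| \leq \tfrac{b}{4A} \right\}
\end{equation*}
one computes $u(x) - u(x_0) \geq c_n b^2/A$, while the Lebesgue volume satisfies $|E| \geq c_n (b/A)^n$. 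Since the hypothesized $C^2$ bound on $\log d\mu/dVol$ gives a positive lower bound for the density (absorbed into the constants), this yields
\begin{equation*}
\int_E (u - u(x_0))^2 \, d\mu \geq c'_n \, \frac{b^{n+4}}{A^{n+2}}.
\end{equation*}

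To convert this into a lower bound on $\int u^2\, d\mu$, I would split on the size of $|u(x_0)|$. Using the elementary inequality $(u - u(x_0))^2 \leq 2 u^2 + 2 u(x_0)^2$, either $\int_E u^2\, d\mu$ already dominates the right side, giving $b^{n+4} \leq C \|u\|_{L^2(d\mu)}^2$ directly, or else $u(x_0)^2 \mu(E) \gtrsim b^{n+4}/A^{n+2}$, forcing $|u(x_0)| \gtrsim b^2/A$. In that second case the fact that $b = \max|\nabla u|$ makes $u$ globally $b$-Lipschitz, so on the ball $B\bigl(x_0, |u(x_0)|/(2b)\bigr)$ the function $|u|$ remains at least $|u(x_0)|/2$, giving
\begin{equation*}
\int_{B(x_0, |u(x_0)|/(2b))} u^2\, d\mu \gtrsim \frac{|u(x_0)|^{n+2}}{b^n} \gtrsim \frac{b^{n+4}}{A^{n+2}}.
\end{equation*}

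The main obstacle is that the cylinder $E$ and the ball above both have to sit inside a single normal chart, which forces the crude a priori bounds $b \leq C_0(A)$ and $|u(x_0)| \leq C_1(A)$. I would establish these by first running the same argument at a fixed macroscopic scale (say radius $1$, permitted by (\ref{mep})): if $b$ is much larger than $A$, the semiconvex Taylor bound still produces a region of Riemannian volume $\gtrsim 1$ on which $u - u(x_0) \gtrsim b$, and the same case split yields $\|u\|_{L^2(d\mu)}^2 \gtrsim b^2$. Choosing $c(A,g,\mu)$ small enough that $\int |u|^2 d\mu \leq c$ rules this out pins $b$ into the small regime where the sharp cylinder $E$ fits and the $b^{n+4}$ estimate applies. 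With that bootstrap in hand, the conclusion $b^{n+4} \leq C(A, g, \mu) \|u\|_{L^2(d\mu)}^2$ follows.
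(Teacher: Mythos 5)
Your proposal is correct and follows essentially the same strategy as the paper: semiconvexity along the gradient direction yields a region of volume $\gtrsim (b/A)^n$ on which $u$ exceeds $u(x_0)$ by $\gtrsim b^2/A$, and the regime $b \geq A$ is excluded by running the same argument at unit scale and using the smallness of $\int |u|^2 d\mu$. The only real difference is in how the deviation bound is converted to a lower bound on $\|u\|_{L^2(d\mu)}^2$: where you split on the size of $|u(x_0)|$ and use the Lipschitz bound on the ball $B(x_0,|u(x_0)|/(2b))$ (which creates the chart-fitting issue you note), the paper compares the values of $u$ on two small balls centered at $0$ and at $(b/(2A),0,\dots,0)$ and observes that on at least one of them $|u|\gtrsim b^2/A$, sidestepping that complication.
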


\begin{proof}
Take normal coordinates at the point where $\left\vert \nabla u\right\vert $
is maximized, such that
\[
\frac{\nabla u}{\left\vert \nabla u\right\vert }=\partial_{1}.%
\]
There is a ball of radius $1$ such that on $B_{1}$ (recalling (\ref{meq})) that
\begin{equation}
\left\vert Du\right\vert \leq1.1b \label{meq2}.%
\end{equation}
Also, using
\[
u_{11}\geq-A
\]
we have
\[
u_{1}(x_{1},0,...,0)\geq b-Ax_{1}\text{ for }x_{1}>0.
\]
Now first we consider the case that
\[
b<A.
\]
Then
\begin{align*}
u(\frac{b}{2A},...,0)-u(0)  &  \geq\int_{0}^{\frac{b}{2A}}\left(
b-Ax_{1}\right)  dx_{1}\\
&  =\frac{3b^{2}}{8A}.
\end{align*}
Now let
\begin{align*}
Q_{1}  &  =B_{\frac{b}{11A}}(0)\\
Q_{2}  &  =B_{\frac{b}{11A}}(\frac{b}{2A},...,0))
\end{align*}
Then by (\ref{meq2})
\begin{align*}
\max_{x\in Q_{1}}\left(  u(x)-u(0)\right)   &  \leq1.1b\frac{b}{11A}\\
\min_{x\in Q_{1}}\left(  u(x)-u(\frac{b}{2A},...,0)\right)   &  \geq
-1.1b\frac{b}{11A}.%
\end{align*}
Thus
\begin{align*}
\max_{x\in Q_{1}}u(x)  &  \leq\frac{b^{2}}{10A}+u(0)\\
\min_{x\in Q_{2}}u(x)  &  \geq-\frac{b^{2}}{10A}+u(\frac{b}{2A},...,0)\\
&  \geq-\frac{b^{2}}{10A}+\frac{3b^{2}}{8A}+u(0)\\
&  \geq\frac{7b^{2}}{40A}+\max_{x\in Q_{1}}u(x)
\end{align*}
which means on at least one of the two balls, we have
\[
\left\vert u\right\vert \geq\frac{7b^{2}}{80A}.
\]
It follows that
\begin{align*}
\left\Vert u\right\Vert _{L^{2}(d\mu)}^{2}  &  \geq\min\mu(x)c_{n}\left(
\frac{b}{11A}\right)  ^{n}\left(  \frac{7b^{2}}{80A}\right)  ^{2}\\
&  \geq c(A,\mu)b^{n+4}.
\end{align*}
Next we claim that $b\geq A$ can be excluded by choosing $c(A,g,\mu)$ small
enough. \ This follows a nearly identical argument, replacing
\begin{align*}
&  u(\frac{b}{2A},...,0)\text{ with }u(\frac{1}{2},...,0)\\
&  B_{\frac{b}{11A}}(0)\text{ with }B_{\frac{1}{11}}(0)
\end{align*}
giving%
\[
u(\frac{1}{2},...,0)-u(0)\geq\frac{3b}{8}%
\]
etc, and showing that
\[
\left\Vert u\right\Vert _{L^{2}(d\mu)}^{2}\geq\min\mu(x)c_{n}\left(  \frac
{1}{11}\right)  ^{n}\left(  \frac{7}{80}b\right)  ^{2}.
\]

\end{proof}

We note here that as one can construct a candidate plan pairing mass between $\mu$ and convex combination of $\mu$ and $\nu$ using the partial transport of the optimal plan, the Wasserstein distance between $\mu$ and convex combinations of  $\mu$ and $\nu$ is bounded above by the Wasserstein distance between $\mu$ and $\nu.$  Thus any estimates obtained from $\delta$ are good along the full path.  

\section{Step 3: Dichotomy bound}

In this section we operate assuming that we have a strong gradient bound. 

\begin{proposition}
\label{chaz}Suppose that $u$ is a solution to (\ref{MAE}) such that the graph
lies in a compact set $K$ away from the cut locus, and suppose that
\[
\left\vert \nabla u\right\vert \leq\delta_{0}.
\]
There exists $C_{1},C_{2}$ which depend on $M,g_{0},\left\Vert f\right\Vert
_{C^{2}},\left\Vert g\right\Vert _{C^{2}},\left\Vert D^{4}c\right\Vert _{K}$
but not on $\delta_{0},$ such that if
\[
\max\Lambda\geq C_{3}:=4n\left(  2.3\right)  ^{11n}\max_{K}D^{2}c
\]
(recall (\ref{lambdadef}))  then
\begin{equation}
\frac{1}{\delta_{0}^{2}}\Lambda\leq C_{1}+C_{2} \Lambda
^{3n-1}. \label{step3}%
\end{equation}

\end{proposition}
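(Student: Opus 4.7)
The plan is a Korevaar-style maximum principle applied to the auxiliary function
$$G(x) = \eta(x) \cdot |w|(x)^{c_n},$$
where $|w|(x) = \max_i w(\partial_i,\partial_i)$ in a normal chart as in (\ref{wabdef}), $c_n$ is a dimensional exponent to be chosen, and $\eta$ is a positive local test function built from a standard cutoff on $B_1$ together with a factor designed to extract the $1/\delta_0^2$ scaling from the hypothesis $|\nabla u|\leq \delta_0$ (for instance $\eta = \zeta \cdot F(|\nabla u|^2/\delta_0^2)$ for an appropriate $F$).

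First, I would fix $x_0$ with $\Lambda(x_0) = \max\Lambda$, pick normal coordinates on $B_1(x_0)$, and use (\ref{mep})--(\ref{meq}) to trade between $g_0$-norms and coordinate components (the $0.9/1.1$ losses being absorbed into the $(2.3)^{11n}$ factor appearing in $C_3$). Under $\max\Lambda \geq C_3$ the cutoff in $\eta$ forces an interior maximizer $x_1 \in B_1$ of $G$, at which I would rotate coordinates to diagonalize $w_{ij}$ with $|w|(x_1) = w_{11}$. The first-order condition $(\log G)_i = 0$ together with the second-order condition $w^{ij}(\log G)_{ij} \leq 0$ then yields the working inequality
$$\frac{w^{ij}\eta_{ij}}{\eta} + c_n \frac{w^{ij}|w|_{ij}}{|w|} \leq \frac{c_n+1}{c_n} \cdot \frac{w^{ij}\eta_i\eta_j}{\eta^2}.$$

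Next, I would lower-bound $w^{ij}|w|_{ij}/|w|$ by differentiating (\ref{MAE}) twice along $\partial_1$ and commuting derivatives to convert $w_{11,ij}$ into $w_{ij,11}$. The commutator produces the MTW term $c_{ij\bar{k}\bar{\ell}}T^{\bar{k}}_1 T^{\bar{\ell}}_1 w^{ij}$; via $T^{\bar{k}}_j = -c^{\bar{k}i}w_{ij}$ from (\ref{asab}) this is of size $|w|^2\,\mathrm{tr}(w^{-1})$, and the determinant bound $\det w \sim 1$ from (\ref{MAE}) gives $\mathrm{tr}(w^{-1}) \lesssim |w|^{n-1}$, so the unfavorable MTW piece is at worst $|w|^{n+1}$. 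The nonnegative quadratic $w^{ip}w^{qj}w_{ij,1}w_{pq,1}$ arising naturally from linearization is available to absorb cross terms. On the $\eta$ side, $(|\nabla u|^2)_{ij} = 2 u_k u_{kij} + 2 u_{ki}u_{kj}$ gives a dominant positive piece $w^{ij}u_{ki}u_{kj}$, which at diagonal $w$ is of order $|w|$; combined with the $1/\delta_0^2$ scaling (and the fact that $|\nabla u|\leq \delta_0$ keeps $\eta$ bounded away from $0$ and $\infty$) this yields the left-hand side $\Lambda/\delta_0^2$ of (\ref{step3}). The mixed $u_k u_{kij}$ piece is absorbed by Cauchy--Schwarz using the identity $|w|_i/|w| = -\eta_i/(c_n\eta)$ at $x_1$, and the cutoff Hessian contributes only to $C_1$. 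Tuning $c_n$ so the surviving polynomial has exponent exactly $3n-1$ yields (\ref{step3}).

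The main obstacle will be the bookkeeping: without MTW positivity, every bad term must be absorbed either into the polynomial $C_1 + C_2|w|^{3n-1}$ or into constants depending on $\|f\|_{C^2}, \|g\|_{C^2}, \|D^4 c\|_K$, with no stray negative powers of $\delta_0$ surviving on the right. Ensuring that the cross terms from both the $\eta$-differentiation and the commutator in the linearized PDE can all be controlled via Cauchy--Schwarz against the one nonnegative quantity $w^{ip}w^{qj}w_{ij,1}w_{pq,1}$, without disturbing the $\delta_0$-structure, is the key technical challenge.
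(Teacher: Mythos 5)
Your plan is the same Korevaar-style argument the paper uses: the test function $\eta\,|w|^{c_n}$, the first- and second-order conditions at an interior maximum (your working inequality is exactly the paper's, after clearing denominators), the term $w^{ij}u_{ki}u_{kj}\gtrsim |w|$ extracting $\Lambda/\delta_0^2$ from the $|Du|^2/\delta_0^2$ factor, and the conversion of the MTW and commutator terms into powers of $|w|$ via $\operatorname{tr}(w^{-1})\lesssim |w|^{n-1}$. The paper uses the additive cutoff $\eta=(1+|Du|^2/\delta_0^2-2|x|^2)^+$ rather than a multiplicative one, and first passes from $|w|$ to the diagonalized entry $w_{11}$ before differentiating, but these are cosmetic differences.

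There is, however, one genuine gap at precisely the point you flag as ``the key technical challenge.'' Pure Cauchy--Schwarz against the good quadratic $\sum_{i,j}h^2_{ij,1}$ (where $h^2_{ij,k}=w^{kk}w^{ii}w^{jj}w_{ij,k}^2$) does \emph{not} suffice to absorb the gradient term. After substituting the first-order condition, the bad side carries $\sum_k h^2_{k1,1}$ with coefficient $(1+c_n)\tfrac{1+\varepsilon_n}{1-\varepsilon_n}+2\varepsilon_n>1$ (the factor exceeding $1$ coming both from $c_n>0$ and from the index-symmetrization defect $w_{11,i}$ versus $w_{i1,1}$), whereas the good quadratic supplies the component $h^2_{11,1}$ with coefficient exactly $1$. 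The residual $-c\,h^2_{11,1}$ is a negative multiple of a square of a third derivative that is not controlled by any power of $|w|$, so the argument as you describe it stalls there. The paper's missing ingredient is to use the \emph{once}-differentiated equation, $h_{11,1}+h_{22,1}+\cdots+h_{nn,1}=\sqrt{w^{11}}\,[\zeta_1+\zeta_{\bar s}T_1^{\bar s}+f_1-g_{\bar s}T_1^{\bar s}]$, to trade $h^2_{11,1}$ for $2n(h^2_{22,1}+\cdots+h^2_{nn,1})+C(1+|w|^3)$; the off-$1$ diagonal squares $h^2_{kk,1}$, $k\geq 2$, do appear in the good quadratic but not in the bad sum, so the deficit is absorbed. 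This trace identity is what makes the specific choice $c_n=\varepsilon_n=\tfrac{1}{11n}$ close the inequality, and without it the exponent-tuning you propose cannot rescue the argument.
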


\begin{proof}
Starting with $\Lambda\geq C_{3}$, there is a point $x_{0}$ and unit vector $e \in T_{x_0}M$  such that
\[
w(e,e)=\Lambda\geq C_{3}.
\]
Take normal coordinates for $g_{0}$, centered at $x_{0}$ and then (recall
(\ref{mep})) consider the following test function
\[
\eta=\left(  1+\frac{1}{\delta_{0}^{2}}|Du|^{2}-2|x|^{2}\right)  ^{+}%
\]
which vanishes outside of $B_{\frac{5}{4}}.$ (To be clear $|Du|^{2}$ is a
\textquotedblleft non-geometric" coordinate quantity, but this is OK).\ Now recall (\ref{wabdef}) and define
\[
\bar{V}:=\exp\left(  c_{n}\ln\left\vert w\right\vert \right)
\]
and go to the point in $B_{\frac{5}{4}}$, which we label $x_{\max},$ where $\eta\bar{V}$ is maximized. \ At this point, diagonalize (via a rigid Euclidean rotation) $w$ so that the maximum eigenvalue
is in the $\partial_{1}$ coordinate direction. We will be taking $c_{n}%
=\frac{1}{11n}.$ \ Since $\eta(0)\geq1$ and $\eta(x_{\max})\leq1+\left(
1.1\right)  ^{2}$ we conclude that
\[
\bar{V}(x_{\max})\geq\frac{1}{2.21}\bar{V}(0)
\]
that is
\begin{align}
w_{11}(x_{\max})  &  =\left\vert w\right\vert (x_{\max})\nonumber\\
&  \geq\left(  \frac{1}{2.21}\right)  ^{11n}\left\vert w\right\vert
(0)\geq\left(  \frac{1}{2.21}\right)  ^{11n}C_{3}=:\tilde{C}_{3}. \label{cmui}%
\end{align}
Now define the function \
\[
V:=\exp\left(  c_{n}\ln w_{11}\right)  .
\]
Since $\eta V\leq\eta\bar{V}$ and $\eta V=\eta\bar{V}$ at $x_{\max}$ it must
also be the case that $\eta V$ has a maximum at $x_{\max}$. We will
show\ below that $V$ satisfies the following when $w_{11}\geq\tilde{C}_{3}:$
\begin{equation}
w^{ij}V_{ij}-\frac{2w^{ij}V_{i}V_{j}}{V}\geq-C_{4}\left(  1+\left\vert
w\right\vert ^{3n-1}\right)  V \label{jacobi}%
\end{equation}
for some $C_{4}(n,D^{4}c,M,D^{2}f,D^{2}g)\ $.

Now at $x_{\max}$ where $\eta V$ is at a maximum
\[
\eta_{i}V+\eta V_{i}=0
\]%
\begin{align*}
w^{ij}(\eta V)_{ij}  &  =w^{ij}\eta_{ij}V+w^{ij}V_{ij}\eta+2w^{ij}V_{i}\left(
-\frac{\eta V_{j}}{V}\right) \\
&  =w^{ij}\eta_{ij}V+\eta\left(  w^{ij}V_{ij}-\frac{2w^{ij}V_{i}V_{j}}%
{V}\right)  \leq0.
\end{align*}
Thus assuming (\ref{jacobi}) we have
\[
w^{ij}\eta_{ij}V\leq\eta C_{4}\left(  1+\left\vert w\right\vert ^{3n-1}%
\right)  V
\]
and we conclude that at $x_{\max}$
\begin{equation}
w^{ij}\eta_{ij}\leq C_{4}\left(  1+\left\vert w\right\vert ^{3n-1}\right)  .
\label{cl0}%
\end{equation}
To arrive at the conclusion (\ref{step3}) it will be crucial then to show two
key facts at $x_{\max}$:\ \ First;
\begin{equation}
w^{ij}\eta_{ij}\geq\frac{1}{2}\frac{\left\vert w\right\vert }{\delta_{0}^{2}%
}-C_{1}^{\prime}-C_{2}^{\prime}|w|^{2n-1} \label{cl1}%
\end{equation}
and second, (\ref{jacobi}). \ 

We begin by showing  (\ref{cl1}). \ Compute%
\begin{equation}
w^{ij}\eta_{ij}=\frac{2}{\delta_{0}^{2}}w^{ij}u_{ki}u_{kj}+\frac{2}{\delta
_{0}^{2}}u_{k}w^{ij}u_{ijk}-4w^{ij}. \label{cf1}%
\end{equation}
Now
\begin{align}
w^{ij}u_{ki}u_{kj}  &  =w^{ij}\left(  w_{ki}-c_{ki}\right)  \left(
w_{kj}-c_{kj}\right) \nonumber\\
&  =\sum w_{ii}-2\sum c_{ii}+w^{ij}c_{ki}c_{kj}\nonumber\\
&  \geq w_{11}-2n\max_{K}D^{2}c\geq\frac{1}{2}w_{11}\ \label{cf2}%
\end{align}
where in the last line we use (this is where the $C_{3}$ condition is
required)\
\[
w_{11}(x_{\max})\geq\left(  \frac{1}{2.21}\right)  ^{11n}4n\left(  2.3\right)
^{11n}\max_{K}D^{2}c.
\]
Now for ease of notation we introduce
\[
\zeta(x,\bar{x})=\ln\det(-c_{i\bar{s}}(x,\bar{x}))
\]
so differentiating (\ref{MAE}) we get
\[
w^{ij}w_{ij,k}=f_{k}-g_{\bar{s}}T_{k}^{\bar{s}}+\zeta_{k}+\zeta_{\bar{s}}%
T_{k}^{\bar{s}}.
\]
Now
\[
w_{ij,k}=u_{ijk}+c_{ijk}+c_{ij\bar{s}}T_{k}^{\bar{s}}%
\]
so%
\[
w^{ij}u_{ijk}=w^{ij}w_{ij,k}-w^{ij}\left(  c_{ijk}+c_{ij\bar{s}}T_{k}^{\bar
{s}}\right)
\]
and combining with (\ref{cf1}) and (\ref{cf2})%
\begin{align*}
w^{ij}\eta_{ij}  &  \geq\frac{1}{\delta_{0}^{2}}w_{11}\ -4w^{ii}\\
&  +\frac{2}{\delta_{0}^{2}}u_{k}\left(  f_{k}-g_{\bar{s}}T_{k}^{\bar{s}%
}+\zeta_{k}+\zeta_{\bar{s}}T_{k}^{\bar{s}}-w^{ij}\left(  c_{ijk}+c_{ij\bar{s}%
}T_{k}^{\bar{s}}\right)  \right)  .
\end{align*}
Next we use the assumption $\left\vert u_{k}\right\vert \leq\delta_{0}$.
Since
\[
\left\vert T_{k}^{s}\right\vert =\left\vert \left(  -c^{\bar{s}i}w_{ik}\right)
\right\vert \leq C_{5}w_{11}%
\]
for
\[
C_{5}:=\sup_{K}\left\vert \left(  D\bar{D}c\right)  ^{-1}\right\vert
\]
which exists from the (A2) condition, we have
\begin{align*}
&  \frac{2}{\delta_{0}^{2}}u_{k}\left(  f_{k}-g_{\bar{s}}T_{k}^{\bar{s}}%
+\zeta_{k}+\zeta_{\bar{s}}T_{k}^{\bar{s}}-w^{ij}\left(  c_{ijk}+c_{ij\bar{s}%
}T_{k}^{\bar{s}}\right)  \right) \\
&  \geq-\frac{2\delta_{0}C_{6}}{\delta_{0}^{2}}\left(  1+\left\vert
w\right\vert +\left\vert w^{-1}\right\vert +\left\vert w^{-1}\right\vert
|w|\right) \\
&  \geq-\frac{1}{2\delta_{0}^{2}}w_{11}-\frac{1}{w_{11}}\left[  C_{6}\left(
1+\left\vert w\right\vert +\left\vert w^{-1}\right\vert +\left\vert
w^{-1}\right\vert |w|\right)  \right]  ^{2}%
\end{align*}
for $C_{6}$ depending on $Df,Dg$,$D\zeta,D^{3}c$ and $C_{5}.$\newline Now
since
\[
\det w_{ij}\geq c_{7}>0
\]
for $c_{7}$ depending on $\left\vert f\right\vert $ and $\left\vert
g(\cdot,t)\right\vert $ we have
\[
\left\vert w^{-1}\right\vert \leq\frac{1}{c_{7}}\left(  \left\vert
w\right\vert ^{n-1}\right)
\]
and
\begin{align*}
w^{ij}\eta_{ij}  &  \geq\frac{1}{\delta_{0}^{2}}w_{11}\ -4w^{ii}-\frac
{1}{2\delta_{0}^{2}}w_{11}-\left[  C_{6}\left(  1+\left\vert w\right\vert
+\left\vert w^{-1}\right\vert +\left\vert w^{-1}\right\vert |w|\right)
\right]  ^{2}\\
&  \geq\frac{1}{2\delta_{0}^{2}}w_{11}-C_{8}-C_{9}\left\vert w\right\vert
^{2n-1},%
\end{align*}
for $C_{8}$ and $C_{9}$ determined from previous constants and power interpolation
inequalities and also the fact that $\left\vert w\right\vert
(x_{\max})=w_{11}$. This proves (\ref{cl1}).

Next, let
\[
b=\ln w_{11}%
\]
we will show that for $c_{n}=\frac{1}{11n}$%
\begin{equation}
w^{ij}b_{i}b_{j}\geq-C_{4}^{\prime}\left(  1+\left\vert w\right\vert
^{3n-1}\right)  +c_{n}w^{ij}b_{i}b_{j}. \label{htr}%
\end{equation}
One can check that (\ref{htr}) implies (recall $V=\exp(c_{n}b)$)
\[
w^{ij}V_{ij}-2w^{ij}\frac{V_{i}V_{j}}{V}\geq-c_{n}C_{4}^{\prime}\left(
1+\left\vert w\right\vert ^{3n-1}\right)  V.
\]
To show (\ref{htr}) differentiate
\begin{align*}
b_{i}  &  =\frac{w_{11,i}}{w_{11}}=w^{11}w_{11,i}\\
b_{ij}  &  =w^{11}w_{11,ij}-w^{1a}w^{b1}w_{ab,j}w_{11,i}\\
w^{ij}b_{ij}  &  =w^{ij}\left(  w^{11}w_{11,ij}-w^{1a}w^{b1}w_{ab,j}%
w_{11,i}\right)  .
\end{align*}
Calling
\[
h_{ij,k}^{2}:=w^{kk}w^{ii}w^{jj}w_{ij,k}^{2}.
\]
and using diagonalization of $w_{ij}$, we find that
\begin{equation}
w^{ij}b_{ij}=w^{11}w^{ij}w_{11,ij}-h_{11,i}^{2}. \label{operator-leading-term}%
\end{equation}
We also have
\begin{align*}
w^{ij}b_{i}b_{j}  
&  =h_{11,i}^{2}.%
\end{align*}
Therefore,
\begin{equation}
w^{ij}b_{ij}-c_{n}w^{ij}b_{i}b_{j}=w^{11}w^{ij}w_{11,ij}-(1+c_{n})h_{11,i}%
^{2}. \label{here}%
\end{equation}
We focus on the term $w^{11}w^{ij}w_{11,ij}$. Differentiating \eqref{MAE}
twice we get
\begin{align*}
w^{ij}w_{ij,1}  &  =\zeta_{1}+\zeta_{\bar{s}}T_{1}^{\bar{s}}+f_{1}-g_{s}T_{1}^{\bar{s}}\\
w^{ij}w_{ij,11}-w^{ia}w^{bj}w_{ab,1}w_{ij,1}  &  =\zeta_{11}+\zeta_{1\bar{s}%
}T_{1}^{\bar{s}}+(\zeta_{1\bar{s}}+\zeta_{\bar{s}\bar{p}}T_{1}^{\bar{p}})T_{1}^{\bar{s}}\\
&  +\zeta_{\bar{s}}T_{11}^{\bar{s}}+f_{11}-g_{\bar{s}\bar{p}}T_{1}^{\bar{p}}T_{1}^{\bar{s}}-g_{\bar{s}}   T_{11}^{\bar{s}}\\
&  \geq-C_{10}(1+\left\vert w\right\vert ^{2})+\left(  \zeta_{\bar{s}}%
-g_{\bar{s}}\right)  T_{11}^{\bar{s}}%
\end{align*}
for $C_{10}$ depending on $C_{5},$ $D^{2}f,D^{2}g,$ and $D^{2}\zeta.$
\ Therefore,
\begin{align}
&  w^{ij}w_{11,ij}=w^{ij}(w_{11,ij}-w_{ij,11})+w^{ij}w_{ij,11}\nonumber\\
&  \geq w^{ij}(w_{11,ij}-w_{ij,11})+\sum_{i,j}h^2_{ij,1}\nonumber\\
&  -C_{10}(1+\left\vert w\right\vert ^{2})+\left(  \zeta_{\bar{s}}-g_{\bar{s}%
}\right)  T_{11}^{\bar{s}} .\label{cv2}%
\end{align}
A standard computation (as in \cite{MTW}) yields%
\[
w_{11,ij}-w_{ij,11}\geq-C_{11}(1+\left\vert w\right\vert ^{2})+c_{11,\bar{s}%
}T_{ij}^{\bar{s}}+c_{ij,\bar{s}}T_{11}^{\bar{s}}%
\]
for $C_{11}$ depending on $C_{5},$ $D^{2}f,D^{2}g,$ $D^{2}\zeta.$and $D^{4}c$,
thus%
\begin{equation}
w^{ij}(w_{11,ij}-w_{ij,11})\geq-C_{12}(1+\left\vert w\right\vert
^{n+1})+c_{11,\bar{s}}w^{ij}T_{ij}^{\bar{s}}+c_{ij,\bar{s}}w^{ij}T_{11}^{\bar{s}}
\label{cv4}%
\end{equation}
for $C_{12}$ depending on $c_{7}, C_{11}$ and polynomial interpolation inequalities. \ Now
differentiating
\[
w^{ij}w_{ki,j}=w^{ij}\left(  -c_{k\bar{s}}T_{i}^{\bar{s}}\right)  _{j}=-c_{k\bar
{s}j}w^{ij}T_{i}^{\bar{s}}-c_{k\bar{s}}w^{ij}T_{ij}^{\bar{s}}%
\]
we get
\begin{align}
w^{ij}T_{ij}^{s}  &  =\left(  -c^{k \bar{s}}\right)  w^{ij}w_{ki,j}+\left(  -c^{k \bar{s}}\right)c_{k\bar{s}%
j}w^{ij}T_{i}^{s}\nonumber\\
&  =\left(  -c^{k \bar{s}}\right)  w^{ij}w_{ij,k}+\left(  -c^{ks}\right)
w^{ij}\left(  w_{ki,j}-w_{ij,k}\right)  +\left(  -c^{k \bar{s}}\right)c_{k\bar{s}j}\left(  -c^{sj}\right)
\nonumber\\
&  \geq-C_{13}(1+\left\vert w\right\vert ^{n}). \label{cv5}%
\end{align}
Also
\begin{align*}
T_{11}^{\bar{s}}  &  =\left( - c^{k\bar{s}}w_{k1}\right)  _{1}=-c_{1}^{\bar{s}k}%
w_{k1}-c^{\bar{s}k}w_{k1,1}\\
&  \leq C_{14}\left(  1+\left\vert w\right\vert ^{2}\right)  +\sum_{k}%
C_{5}\sqrt{w_{kk}}w_{11}\frac{w_{k1,1}}{\sqrt{w_{kk}}w_{11}}.
\end{align*}
Hence
\begin{align}
c_{ij,\bar{s}}w^{ij}T_{11}^{\bar{s}}  &  \geq-C_{15}\left\vert w\right\vert
^{n-1}\left(  C_{14}\left(  1+\left\vert w\right\vert ^{2}\right)  +\sum
_{k}C_{5}\sqrt{w_{kk}}w_{11}\frac{w_{k1,1}}{\sqrt{w_{kk}}w_{11}}\right)
\nonumber\\
&  \geq-C_{16}\left\vert w\right\vert ^{n-1}\left(  1+\left\vert w\right\vert
^{2}\right)  -\frac{\left(  C_{15}C_{5}\right)  ^{2}}{\varepsilon_{n}%
}\left\vert w\right\vert ^{2n-2}\left\vert w\right\vert ^{3}-\varepsilon
_{n}h_{k1,1}^{2} \label{cv6}%
\end{align}
for some $\varepsilon_{n}$ to be chosen. \ Combining (\ref{cv2}), (\ref{cv4}),
(\ref{cv5}), (\ref{cv6}), and using reasoning leading to (\ref{cv6}) on the remaining terms\ gives
\begin{equation}
w^{ij}w_{11,ij}\geq-C_{17}\left(  1+\frac{1}{\varepsilon_{n}}\left\vert
w\right\vert ^{2n+1}\right)  -2\varepsilon_{n}h_{k1,1}^{2}+\sum_{i,j}%
h_{ij,1}^{2}. \label{there}%
\end{equation}

Now compute that
\begin{align*}
&  h_{ij,k}^{2}-h_{kj,i}^{2}=w^{kk}w^{ii}w^{jj}(w_{ij,k}^{2}-w_{kj,i}^{2})\\
&  =w^{kk}w^{ii}w^{jj}(w_{ij,k}-w_{kj,i})(w_{ij,k}+w_{kj,i})
\end{align*}
and
\begin{align*}
w_{ij,k}  &  =u_{ijk}+c_{ijk}+c_{ijs}T_{k}^{s}\\
w_{kj,i}  &  =u_{kji}+c_{jki}+c_{jks}T_{i}^{s}%
\end{align*}

So
\begin{align*}
w^{kk}w^{ii}w^{jj}(w_{ij,k}-w_{kj,i})(w_{ij,k}+w_{kj,i}) &  \leq w^{kk}%
w^{ii}w^{jj}C_{18}\left\vert w\right\vert \left(  (w_{ij,k}+w_{kj,i})\right)
\\
&  \leq w^{kk}w^{ii}w^{jj}\left(  \frac{2C_{18}^{2}}{\varepsilon_{n}%
}\left\vert w\right\vert ^{2}+\frac{\varepsilon_{n}}{2}(w_{ij,k}+w_{kj,i}%
)^{2}\right)  \\
&  \leq\left(  \frac{1}{c_{7}}\right)  ^{3}\frac{2C_{18}^{2}}{\varepsilon_{n}%
}\left\vert w\right\vert ^{3n-1}+\varepsilon_{n}\left(  h_{ij,k}^{2}%
+h_{kj,i}^{2}\right) .
\end{align*}
which gives
\[
h_{ij,k}^{2}\leq C_{19}\left(  \varepsilon_{n}\right)  \left\vert w\right\vert
^{3n-1}+\frac{\left(  1+\varepsilon_{n}\right)  }{(1-\varepsilon_{n})}%
h_{kj,i}^{2}%
\]
and we can apply to (\ref{here}) using (\ref{there})
\begin{align}
w^{ij}b_{ij}-c_{n}w^{ij}b_{i}b_{j} &  \geq-C_{17}\left(  1+\frac
{1}{\varepsilon_{n}}\left\vert w\right\vert ^{2n+1}\right)  -2\varepsilon
_{n}h_{k1,1}^{2}+\sum_{i,j}h_{ij,1}^{2}\label{se4}\\
&  -\sum_{i}(1+c_{n})\left(  C_{19}\left(  \varepsilon_{n}\right)  \left\vert
w\right\vert ^{3n-1}+\frac{\left(  1+\varepsilon_{n}\right)  }{(1-\varepsilon
_{n})}h_{i1,1}^{2}\right)  \nonumber\\
&  \geq-C_{20}(\varepsilon_{n},c_{n})(1+\left\vert w\right\vert ^{3n-1}%
)\ +\sum_{i,j}h_{ij,1}^{2}\nonumber\\
&  -\sum_{k}\left(  (1+c_{n})\frac{\left(  1+\varepsilon_{n}\right)
}{(1-\varepsilon_{n})}+2\varepsilon_{n}\right)  h_{k1,1}^{2}.\nonumber
\end{align}

Now observe that%

\begin{align}
\sum_{i,j}h_{ij,1}^{2}-\sum_{k}\left(  1+\frac{1}{2n}\right)  h_{k1,1}^{2}  &
=\sum_{k}-\frac{1}{2n}h_{k1,1}^{2}+\sum_{k,i>1}h_{ki,1}^{2}\label{se7}\\
&  \geq(1-\frac{1}{2n})\sum_{k>1}h_{k1,1}^{2}\nonumber\\
&  -\frac{1}{2n}h_{11,1}^{2}+h_{22,1}^{2}+....+h_{nn,1}^{2}.\nonumber
\end{align}
Now
\[
h_{11,1}+h_{22,1}+...+h_{nn,1}=\sqrt{w^{11}}w^{kk}w_{kk,1}=\sqrt{w^{11}%
}\left[  \zeta_{1}+\zeta_{\bar{s}}T_{1}^{s}+f_{1}-g_{\bar{s}}T_{1}^{\bar{s}%
}\right]
\]
so
\[
h_{11,1}=-\left(  h_{22,1}+...+h_{nn,1}\right)  +\sqrt{w^{11}}\left[
\zeta_{1}+\zeta_{\bar{s}}T_{1}^{s}+f_{1}-g_{\bar{s}}T_{1}^{\bar{s}}\right]
\]
so
\begin{align*}
h_{11,1}^{2}  &  \leq2\left(  h_{22,1}+...+h_{nn,1}\right)  ^{2}+\frac
{2C_{21}}{w_{11}}\left[  1+\left\vert w\right\vert ^{2}\right]  ^{2}\\
&  \leq2n\left(  h_{22,1}^{2}+...+h_{nn,1}^{2}\right)  +C_{21}(1+\left\vert
w\right\vert ^{3})
\end{align*}
plugging back into (\ref{se7})
\begin{align*}
\sum_{i,j}h_{ij,1}^{2}-\sum_{k}\left(  1+\frac{1}{2n}\right)  h_{k1,1}^{2} &
\geq-\frac{1}{2n}\left(  2n\left(  h_{22,1}^{2}+...+h_{nn,1}^{2}\right)
+C+C\left\vert w\right\vert ^{3}\right)  \\
&  +h_{22,1}^{2}+...+h_{nn,1}^{2}\\
&  \geq-\frac{1}{2n}C_{21}(1+\left\vert w\right\vert ^{3})
\end{align*}
So we choose $\varepsilon_{n}$ and $c_{n}$ small enough so that ($\varepsilon
_{n}=c_{n}=\frac{1}{11n}$ will work)
\[
\left(  (1+c_{n})\frac{\left(  1+\varepsilon_{n}\right)  }{(1-\varepsilon
_{n})}+2\varepsilon_{n}\right)  \leq1+\frac{1}{2n}%
\]
and we have from (\ref{se4})
\[
w^{ij}b_{ij}-c_{n}w^{ij}b_{i}b_{j}\geq-C_{20}(\varepsilon_{n},c_{n}%
)(1+\left\vert w\right\vert ^{3n-1})-\frac{1}{2n}C_{21}(1+\left\vert
w\right\vert ^{3}).
\]
Proving (\ref{htr}). \ 

Recapping and obtaining (\ref{step3}): \ We assumed that $\left\vert
w\right\vert (x_{0})\geq C_{3}$ and conclude that at a point nearby, we have
(\ref{cmui})
\[
w_{11}(x_{\max})\geq\left(  \frac{1}{2.21}\right)  ^{11n}\left\vert
w\right\vert (x_{0})
\]
which satisfies at $x_{\max}$ (\ref{cl1}), (\ref{cl0})
\begin{equation}
\frac{1}{2}\frac{w_{11}}{\delta_{0}^{2}}\leq C_{4}(1+w_{11}^{3n-1}).
\label{brig}%
\end{equation}
Plugging
\[
w_{11}(x_{\max})=\text{ }\sigma\Lambda(x_{0})
\]
for $\sigma\in\lbrack\left(  \frac{1}{2.21}\right)  ^{11n},1]$ into
(\ref{brig}) yields (\ref{step3}) with $C_{1}=\left(  \frac{1}{2.21}\right)
^{11n}C_{4}$ and $C_{2}=C_{4}.$

\end{proof}

\section{Step 4: conclusion}

To prove Theorem \ref{main}, choose any two measures satisfying the condition
\[
\left\Vert \log\frac{d\mu}{dVol}\right\Vert _{C^{2}},\left\Vert \log\frac
{d\nu}{dVol}\right\Vert _{C^{2}}\leq\bar{C}.
\]
We claim that the Hessian of the solution to the scalar optimal transport equation for these measures is uniformly bounded by $C_3+1$,
provided that
\begin{equation}
W^{2}(\mu,\nu)<\delta\label{dde}%
\end{equation}
for some $\delta$ to be chosen. \ \ By Proposition \ref{chaz}, we can choose
$\delta_{0}$\ small enough so that the bound on $\Lambda$ by
\begin{equation}
\frac{1}{\delta_{0}^{2}}\Lambda\leq C_{1}+C_{2}\Lambda^{3n-1} \label{ddd}%
\end{equation}
splits the possible values of $\Lambda$ into two regions $\left[  0,a\right]
$ and $[b,\infty)$ and with
\[
a<C_{3}< C_3+1< b.
\]
Now chose $\delta$ according to (\ref{preb}) and Proposition \ref{bbbb} so
that
\[
\left\vert \nabla u\right\vert \leq\delta_{0}%
\]
for any $u$ solving an optimal transportation equation between measures
satisfying (\ref{dde}). \ Start the continuity method, mapping $\mu$ to $\mu$
at time $0$ and moving towards $t=1.$ \ We claim that $\Lambda<b$ for every
time along this path. \ If not, then there must be a time where the max value
of $\Lambda$ is exactly $C_{3}.$ \ At this point we apply (\ref{ddd}) and get
a contradiction. \ 

Note that this bound is also what allows us to continue along the full path.  Once the $C^2$ bounds are shown, we can use Krylov-Evans theory which applies to the concave Monge-Amp\`ere equation.    The $C^{2,\alpha}$ estimates guarantee that the set of $t \in [0,1]$ on which a solution exists is closed.  

\bibliographystyle{plain}
\bibliography{ot}

\end{document}